\newcommand{\R}{\ensuremath{\mathbb{R}}}
\newcommand{\E}{\ensuremath{\mathbb{E}}}
\newcommand{\W}[2]{\ensuremath{W^{(#1)}(#2)}}
\newcommand{\dW}[2]{\ensuremath{W^{(#1)\prime}(#2)}}
\newcommand{\Z}[2]{\ensuremath{Z^{(#1)}(#2)}}
\newtheorem{theorem}{Theorem}[section]
\newtheorem{cor}[theorem]{Corollary}
\newtheorem{thm}[theorem]{Theorem}
\newtheorem{rem}[theorem]{Remark}
\newtheorem{defi}[theorem]{Definition}
\renewcommand{\P}{\ensuremath{\mathbb{P}}}
\title{\bf Spectrally negative L\'evy processes perturbed by functionals of their running supremum}
\author{{\large A. Kyprianou\footnote{{\sc University of Bath, UK.} E-mail: a.kyprianou@bath.ac.uk}
\ \ {\large and}
\  C. Ott\footnote{{\sc  University of Bath, UK.} E-mail: C.Ott@bath.ac.uk}}
}
\begin{document}
\renewcommand{\figurename}{Fig.}
\maketitle
\begin{abstract}\noindent 
In the setting of the classical Cram\'er-Lundberg risk insurance model, Albrecher and Hipp \cite{albr_hipp} introduced the idea of tax payments. More precisely, if $X = \{X_t : t\geq 0\}$ represents the Cram\'er-Lundberg process and, for all $t\geq 0$, $S_t  = \sup_{s\leq t}X_s$, then \cite{albr_hipp} study $X_t - \gamma S_t$, $t\geq 0$, where $\gamma\in(0,1)$ is the rate at which tax is paid. This model has been generalised to the setting that $X$ is a spectrally negative L\'evy process by Albrecher et al. \cite{albr_ren_zhou}. Finally Kyprianou and Zhou \cite{kyp_zhou} extend this model further by allowing the rate at which tax is paid with respect to the process $S = \{S_t : t\geq 0\}$ to vary as a function of the current value of $S$. Specifically, they consider the so-called {\it perturbed} spectrally negative L\'evy process,
\[
U_t=X_t-\int_{(0,t]}\gamma(S_u)\,{\rm d} S_u,\qquad t\geq 0,
\]
under the assumptions  $\gamma :[0,\infty)\rightarrow [0,1)$ and $\int_0^\infty (1-\gamma(s)){\rm d}s =\infty$.

In this article we show that a number of the identities in \cite{kyp_zhou} are still valid for a much more general class of rate functions $\gamma:[0,\infty)\rightarrow \mathbb{R}$. Moreover, we show that, with appropriately chosen $\gamma$, the perturbed process can pass continuously (ie. creep) into $(-\infty, 0)$ in two different ways.

\medskip

\noindent {\sc Key wor{\rm d}s and phrases}: Spectrally negative L\'evy process, excursion theory, creeping, ruin.

\medskip

\noindent MSC 2010 subject classifications: 60K05, 60K15, 91B30.
\end{abstract}

\section{Introduction}
Let $X=\{X_t:t\geq 0\}$ be a spectrally negative L\'evy process defined on a filtered probability space $(\Omega,\mathcal{F},\mathbb{F}=\{\mathcal{F}_t\}_{t\geq 0},\P)$ satisfying the natural conditions (cf. p.39, Section 1.3 of~\cite{bichteler}). That is to say, a one-dimensional process which has stationary and independent increments, c\`adl\`ag paths with only negative discontinuities but which does not have monotone paths. For $x\in\R$, denote by $\P_x$ the probability measure under which $X$ starts at $x$ and write $\P_0=\P$. It is well known that a spectrally negative L\'evy process $X$ is characterised by its L\'evy triplet $(\gamma,\sigma,\Pi)$, where $\sigma\geq0, \gamma\in\R$ and $\Pi$ is a measure on $(-\infty,0)$ satisfying the condition $\int_{(-\infty,0)}(1\wedge x^2)\,\Pi(dx)<\infty$. By the L\'evy-It\^o decomposition, $X$ may be represented in the form
\begin{equation}
X_t=\sigma B_t-\gamma t+X^{(1)}_t+X^{(2)}_t,\label{LevyItodecomposition1}
\end{equation}
where $\{B_t:t\geq 0\}$ is a standard Brownian motion, $\{X^{(1)}_t:t\geq 0\}$ is a compound Poisson process with discontinuities of magnitude bigger than or equal to one and $\{X_t^{(2)}:t\geq 0\}$ is a square integrable martingale with discontinuities of magnitude strictly smaller than one and the three processes are mutually independent. In particular, if $X$ is of bounded variation, the decomposition reduces to
\begin{equation}
X_t=\mathtt{d}t-\eta_t\label{LevyItodecomposition2}
\end{equation}
where $\mathtt{d}>0$ and $\{\eta_t:t\geq 0\}$ is a driftless subordinator. Further let
\begin{equation*}
\psi(\theta):=\log\E[e^{\theta X_1}],\qquad\theta\geq 0,
\end{equation*}
be the Laplace exponent of $X$ which is known to be a strictly convex and infinitely differentiable function on $[0,\infty)$. The asymptotic behavior of $X$ is characterised by $\psi^\prime(0+)$, so that $X$ drifts to $\pm\infty$ or oscillates according to whether $\pm\psi^\prime(0+)>0$ or $\psi^\prime(0+)=0$, respectively.\\
\indent Denote by $S=\{S_t:t\geq 0\}$ the running supremum, that is, $S_t=\sup_{s\leq t}X_s$ for each $t\geq 0$. We are interested in  perturbing  $X$ by some functional of its running supremum $S$. Motivated by the results in~\cite{kyp_zhou}, our primary object of study is given by $U = \{U_t : t\geq 0\}$, where
\begin{equation*}
U_t=X_t-\int_{(0,t]}\gamma(S_u)\,{\rm d}S_u, \qquad t\geq 0,
\end{equation*}
for some locally $S$-integrable function $\gamma:[0,\infty)\to\R$. Such processes have appeared in the context  of insurance risk models with tax, where $X$ plays the role of the so-called surplus process (the wealth of an insurance company) and  $\gamma$ characterises the rate at which tax is paid with respect to the running maximum. One may also think of the process $U$ as a spectrally negative L\'evy process perturbed by a functional of its maximum in the spirit of \cite{PW1997}. In the special case that $\gamma:[0,\infty)\to[0,1)$ and $\int_0^\infty(1-\gamma(s))\,{\rm d}s=\infty$ our process $U$ agrees with the process studied in~\cite{kyp_zhou}. Under the even stronger assumption that $\gamma$ is a constant in $(0,1)$, the resulting process has been considered in~\cite{albr_hipp} and~\cite{albr_ren_zhou}.
In the simple case that $\gamma=0$ we are back to the process $X$, the so-called L\'evy insurance risk process in the context of ruin theory. The main objective of this article is to show that all of the identities in \cite{kyp_zhou} carry over to the setting where $\gamma$ belongs to the general class of locally $S$-integrable functions. Moreover, we will show that, for some choices of $\gamma$ it is possible for the process $U$ to enter $(-\infty, 0)$ continuously in two different ways.

 The key observation which, with the help of excursion theory, lea{\rm d}s to all our results is that we may write $U$ in the form
\begin{equation}
U_t=A_t-(S_t-X_t),\qquad t\geq 0,
\label{pathdecomp}
\end{equation}
where the process $A=\{A_t:t\geq 0\}$ is given by
\begin{equation}
A_t:=S_t-\int_{(0,t]}\gamma(S_u)\,{\rm d}S_u,\qquad t\geq 0.
\label{A}
\end{equation}
Assuming that $X_0 = x$, one may write $A_t=\bar\gamma(S_t)$, where 
\begin{equation*}
\bar\gamma(s):=s-\int_x^s\gamma(y)\,{\rm d}y,\qquad s\geq x.
\end{equation*}
Note that $A$ is a process of bounded variation and accordingly we may think of ${\rm d}A_t$ as a signed measure whose support, say $\mathcal{A}$, is contained in the support of the measure ${\rm d}S$. Suppose now that $\mathcal{B}$ consists of the countable union of open intervals of time which correspond to the epochs that the process $S-X$ spen{\rm d}s away from zero. Then ${\mathcal A}\cap{\mathcal B} = \emptyset$. As a consequence we may interpret (\ref{pathdecomp}) as a path decomposition in which excursions of $X$ from its maximum (equivalently excursions of $S-X$ away from zero) are `hung' off the trajectory of $A$ between its increment times (see Fig.~\ref{picture_1} for a symbolic representation). A more detailed description of this excursion-theoretic decomposition will follow in due course.\\
\indent We conclude this section by introducing the so-called scale functions (cf.~\cite{kyprianou}) which will henceforth play an important role and are defined as follows. For each given $q\geq 0$, we have $\W{q}{x} = 0$ when $x < 0$, and otherwise on $[0,\infty)$, $W ^{(q)}$ is the unique right continuous function whose Laplace transform is
\begin{equation*}
\int_0^\infty e^{-\theta x}\W{q}{x}\,dx=\frac{1}{\psi(\theta)-q},\qquad\theta>\Phi(q),
\end{equation*}
where $\Phi(q)$ is the largest solution to the equation $\psi(\theta)=q$ (there are at most two). For notational convenience we will write $W^{(0)}=W$. It is shown in Lemma 2.3 of~\cite{KuzKypRiv} that, for any $q\geq 0$, $W^{(q)}$ is absolutely continuous with respect to Lebesgue measure and strictly increasing. If $X$ is of unbounded variation, it is additionally known that $W^{(q)}$ is continuously differentiable on $(0,\infty)$ (cf. Lemma 2.4 of~\cite{KuzKypRiv}). In either case we shall denote by $W^{(q)\prime}$ the associated density. Finally, the behavior of $W^{(q)}$ and its right-derivative, written $W^{(q)\prime}_+$, at zero are known. Specifically, for all $q\geq 0$, we have
\begin{equation}\label{continuityatorigin}
\W{q}{0+}=\begin{cases}\mathtt{d}^{-1},&\text{if $X$ is of bounded variation,}\\0,&\text{if $X$ is of unbounded variation.}
\end{cases}
\end{equation}
and
\begin{equation}\label{derivativeatorigin}
W_+^{(q)\prime}(0+)=\begin{cases}
\frac{q+\Pi(-\infty,0)}{\mathtt{d}^2},&\text{if $\sigma=0$ and $\Pi(-\infty,0)<\infty$,}\\
\frac{2}{\sigma^2},&\text{if $\sigma>0$ or $\Pi(-\infty,0)=\infty$,}\
\end{cases}
\end{equation}
where we understand the second case to be $+\infty$ when $\sigma=0$ (cf. Lemma 3.1 and 3.2 of~\cite{KuzKypRiv}).\\

\section{Results}
Let us introduce
\begin{equation*}
\sigma_a:=\inf\{t>0\,:\,S_t=a\}\quad\text{and}\quad T_0^-:=\inf\{t>0\,:\,U_t<0\},
\end{equation*}
where we use the  usual convention  $\inf\emptyset := \infty$.

\begin{figure}[h]
\centering
\includegraphics{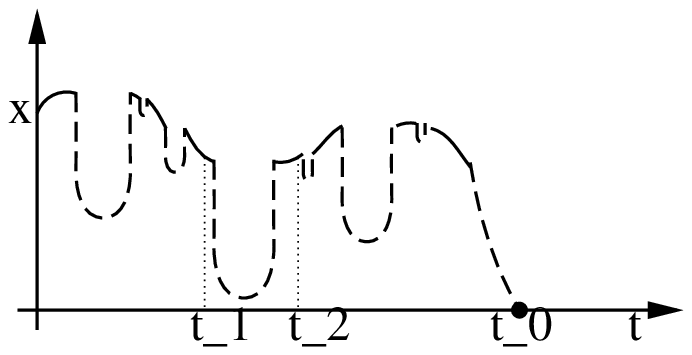}\quad\quad
\includegraphics{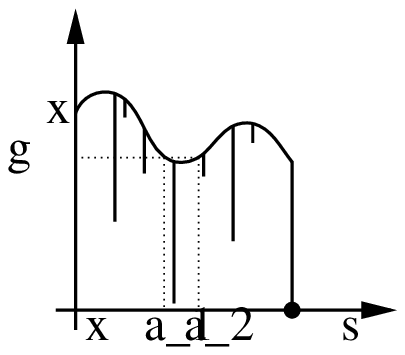}
\caption{A symbolic realisation of the trajectory of $U$ up to the moment it first enters $(-\infty,0)$, with its corresponding trace in the $(x,s)$-plane. The dashed u-shaped curves are schematic representations of  excursions of $X$ away from its maximum or, put differently, excursion of $U$ away from $A$.}\label{picture_1}
\end{figure}

\begin{thm}[One-sided and two-sided exit formulae]\label{two_sided_exit}
{\renewcommand{\theenumi}{\alph{enumi}}
\renewcommand{\labelenumi}{(\theenumi)}
Let $x>0$ be given and define $a^*(x):=\inf\{s\in[x,\infty):\bar\gamma(s)<0\}\in (x,\infty]$. Then, for any $q\geq  0$ and $x\leq a< a^*(x)$, we have
\begin{enumerate}
\item\label{equality_1} 
\begin{equation}
\E_x\big[e^{-q\sigma_a}1_{\{\sigma_a<T_0^-\}}\big]=\exp\bigg(-\int_x^{a}\frac{\dW{q}{\bar\gamma(s)}}{\W{q}{\bar\gamma(s)}}\,{\rm d}s\bigg),
\label{firstformula}
\end{equation}
\item\label{equality_2} \begin{equation*}
\E_x\big[e^{-qT_0^-}1_{\{T_0^-<\sigma_a\}}\big]=\int_x^{a}\exp\bigg(-\int_x^t\frac{\dW{q}{\bar\gamma(s)}}{\W{q}{\bar\gamma(s)}}\,{\rm d}s\bigg)f(\bar\gamma(t))\,{\rm d}t,
\end{equation*}
where $$f(z)=\frac{\Z{q}{z}\dW{q}{z}}{\W{q}{z}}-q\W{q}{z}$$
and $Z^{(q)}(x) = 1 + q\int_0^x W^{(q)}(y){\rm d}y$.
\item\label{equality_3} Suppose that $a^*(x)=\infty$. Then 
\begin{equation}\label{eq_1}
\E_x\big[e^{-qT_0^-}1_{\{T_0^-<\infty\}}\big]=\int_x^{\infty}\exp\bigg(-\int_x^t\frac{\dW{q}{\bar\gamma(s)}}{\W{q}{\bar\gamma(s)}}\,{\rm d}s\bigg)f(\bar\gamma(t))\,{\rm d}t.
\end{equation}
In particular,
\begin{equation}\label{eq_2}
\P_x[T_0^-<\infty]=1-\exp\bigg(-\int_x^\infty\frac{W^\prime(\bar\gamma(s))}{W(\bar\gamma(s))}\,{\rm d}s\bigg).
\end{equation}
\end{enumerate}}
\end{thm}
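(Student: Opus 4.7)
The proof rests on the path decomposition (\ref{pathdecomp}), $U_t = \bar\gamma(S_t)-(S_t-X_t)$, together with It\^o's excursion theory for the reflected process $Y_t := S_t - X_t$. Since $X$ is spectrally negative, $S$ is continuous and may be taken (up to a multiplicative constant absorbed into the excursion measure $n$) as the local time at $0$ of $Y$. Consequently, under $\P_x$, the excursions of $Y$ from $0$ form a Poisson point process $\{(s,\epsilon^s)\}$ on $[x,\infty)\times\mathcal{E}$ with intensity ${\rm d}s\otimes n({\rm d}\epsilon)$, where the label $s$ is the value of $S$ during the excursion. Writing $\zeta$ for the lifetime and $\bar\epsilon$ for the height of a generic excursion, two facts drive everything: (i) $\sigma_a = \sum_{s\leq a}\zeta(\epsilon^s)$, because the set $\{t:{\rm d}S_t > 0\}$ has zero Lebesgue measure; and (ii) by (\ref{pathdecomp}) and the positivity of $\bar\gamma$ on $[x,a^*(x))$,
\[
\{\sigma_a < T_0^-\} = \{\bar\epsilon^s \leq \bar\gamma(s)\ \text{for every excursion labelled by}\ s\in[x,a]\}.
\]

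\textbf{Proof of (a).} The exponential formula for Poisson point processes gives
\[
\E_x\big[e^{-q\sigma_a}1_{\{\sigma_a<T_0^-\}}\big] = \exp\bigg(-\int_x^a n\big(1-e^{-q\zeta}1_{\{\bar\epsilon\leq\bar\gamma(s)\}}\big)\,{\rm d}s\bigg),
\]
so (\ref{firstformula}) follows once one has the classical excursion-measure identity
\[
n\big(1-e^{-q\zeta}1_{\{\bar\epsilon\leq h\}}\big) = \frac{\dW{q}{h}}{\W{q}{h}},\qquad h>0,
\]
which is a direct consequence of the two-sided exit identities for $X$ (see e.g.\ \cite{kyprianou}).

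\textbf{Proof of (b) and (c).} For (b), I condition on the local-time level $s$ of the first excursion causing ruin, i.e.\ the first $s$ with $\bar\epsilon^s > \bar\gamma(s)$. The compensation formula for the above Poisson point process yields
\[
\E_x\big[e^{-qT_0^-}1_{\{T_0^-<\sigma_a\}}\big] = \int_x^a \E_x\big[e^{-q\sigma_s}1_{\{\sigma_s<T_0^-\}}\big]\cdot n\big(e^{-q\tau^+_{\bar\gamma(s)}};\,\tau^+_{\bar\gamma(s)}<\zeta\big)\,{\rm d}s,
\]
where $\tau^+_h := \inf\{u:\epsilon_u>h\}$. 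The first factor is (a) applied up to level $s$, and the second factor is identified, again from the two-sided exit and resolvent identities for $X$, as
\[
n\big(e^{-q\tau^+_h};\,\tau^+_h<\zeta\big) = \frac{\Z{q}{h}\dW{q}{h}}{\W{q}{h}} - q\W{q}{h} = f(h),
\]
proving (b). For (c), monotone convergence applied to (b) as $a\uparrow\infty$ gives (\ref{eq_1}); specialising then to $q=0$ makes $\Z{0}{\cdot}\equiv 1$ and $f=W^\prime/W$, so the integrand in (\ref{eq_1}) equals $-\frac{{\rm d}}{{\rm d}t}\exp\big(-\int_x^t W^\prime(\bar\gamma(s))/W(\bar\gamma(s))\,{\rm d}s\big)$ and the integral telescopes to (\ref{eq_2}).

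\textbf{Main obstacle.} The exponential- and compensation-formula manipulations hinge entirely on the two excursion-measure identities above, and the principal technical work is to establish them and to justify the thinning of the excursion point process by the level-dependent condition $\bar\epsilon\leq\bar\gamma(s)$. This is where the standing restriction $a<a^*(x)$ plays its role: it keeps $\bar\gamma$ bounded away from $0$ on $[x,a]$, so that in view of (\ref{continuityatorigin}) and (\ref{derivativeatorigin}) the integrands in (\ref{firstformula}) and (\ref{eq_1}) are bounded and Fubini and monotone convergence apply without difficulty. A small separate check is required in the bounded-variation case, where the excursion measure carries mass on arbitrarily short excursions, but the same two identities continue to hold with the normalisation dictated by (\ref{continuityatorigin}).
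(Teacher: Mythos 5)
Your argument is essentially the paper's: part (a) is the exponential formula for the Poisson point process of excursions of $S-X$ thinned by the moving barrier $\bar\gamma(s)$, part (b) is the compensation formula applied to the first excursion exceeding the barrier together with the Avram--Kyprianou--Pistorius identity $n\big(e^{-q\rho_z};\overline\epsilon>z\big)=f(z)$, and part (c) follows by letting $a\uparrow\infty$. The one genuine difference is how the discounting $e^{-q\sigma_a}$ is handled in (a): the paper first performs the Esscher change of measure to $\P^{\Phi(q)}$, which reduces the computation to the probability of no excursion exceeding the barrier under the tilted excursion measure $n_{\Phi(q)}$, using $n_{\Phi(q)}(\overline\epsilon>h)=\dW{q}{h}/\W{q}{h}-\Phi(q)$; you instead keep the measure $\P$ and feed the joint functional $1-e^{-q\zeta}1_{\{\overline\epsilon\le h\}}$ into the exponential formula. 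Both routes work, but be careful with your two driving claims in the bounded variation case: there the set of times $X$ spends at its supremum has \emph{positive} Lebesgue measure, the inverse local time $L^{-1}$ carries a drift $1/\mathtt{d}$, so $\sigma_a=(a-x)/\mathtt{d}+\sum_{s\le a}\zeta(\epsilon^s)$ rather than the pure sum you assert, and correspondingly the correct excursion identity is
\begin{equation*}
n\big(1-e^{-q\zeta}1_{\{\overline\epsilon\le h\}}\big)=\frac{\dW{q}{h}}{\W{q}{h}}-\frac{q}{\mathtt{d}},
\end{equation*}
as one sees by matching against $\E_x\big[e^{-q\tau^+_a}1_{\{\tau^+_a<\tau^-_0\}}\big]=\W{q}{x}/\W{q}{a}$. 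The two omissions cancel exactly, so your final formula (\ref{firstformula}) is still correct, but as written each intermediate claim is false for bounded variation, and your closing remark misattributes the needed correction to short excursions rather than to the drift of $L^{-1}$; the paper's change-of-measure route sidesteps this because the $-\Phi(q)$ term in $n_{\Phi(q)}(\overline\epsilon>h)$ absorbs the drift uniformly in both cases. Everything else, including the telescoping that yields (\ref{eq_2}) from (\ref{eq_1}) at $q=0$ (the paper instead takes the complement of the event in (a)), is sound.
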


\begin{rem}
If we assume that $\gamma:[0,\infty)\to[0,1)$ with $\int_0^\infty(1-\gamma(s))\,{\rm d}s=\infty$, then $\bar\gamma$ is continuous, strictly increasing and has a well-defined inverse on $[x,\infty)$ which we shall denote by  $\bar\gamma^{-1}$. Then, for $a\geq x$, if we write $T_a^+=\inf\{t>0:U_t>a\} = \sigma_{\bar\gamma^{-1}(a)}$, Theorem~\ref{two_sided_exit} reads
\begin{eqnarray}
\E_x\big[e^{-qT_a^+}1_{\{T_a^+<T_0^-\}}\big]&=&\exp\bigg(-\int_x^{\bar\gamma^{-1}(a)}\frac{\dW{q}{\bar\gamma(s)}}{\W{q}{\bar\gamma(s)}}\,{\rm d}s\bigg)\notag\\
&=&\exp\bigg(-\int_x^a\frac{\dW{q}{y}}{\W{q}{y}(1-\gamma(\bar\gamma^{-1}(y)))}\,{\rm d}y\bigg)
\label{alsofordecreasing}
\end{eqnarray}
which agrees with Theorem 1.1 in~\cite{kyp_zhou}. Similarly, if $\gamma=0$, then $U_t=X_t,\tau^+_a:=\inf\{t>0:X_t>a\}$ and Theorem~\ref{two_sided_exit} reduces to
\begin{equation*}
\E_x\big[e^{-q\tau^+_a}1_{\{\tau^+_a<T_0^-\}}\big]=\exp\bigg(-\int_x^{a}\frac{\dW{q}{s}}{\W{q}{s}}\,{\rm d}s\bigg)\\
=\frac{\W{q}{x}}{\W{q}{a}}
\end{equation*}
and 
\begin{eqnarray*}
\E_x\big[e^{-qT_0^-}1_{\{T_0^-<\tau^+_a\}}\big]&=&\int_x^{a}\frac{\W{q}{x}}{\W{q}{t}}f(t)\,{\rm d}t\\
&=&-\W{q}{x}\int_x^{a}\bigg(\frac{Z^{(q)}}{W^{(q)}}\bigg)^\prime(t)\,{\rm d}t\\
&=&\Z{q}{x}-\W{q}{x}\frac{\Z{q}{a}}{\W{q}{a}},
\end{eqnarray*}
where $\Z{q}{x}:=1+q\int_0^x\W{q}{y}\,{\rm d}y$. This agrees with equations (8.8) and (8.9) of~\cite{kyprianou}. Also, by a straightforward calculation one sees that equations~\eqref{eq_1} and~\eqref{eq_2} reduce to equations (8.6) und (8.7) of~\cite{kyprianou}.
\end{rem}

\begin{rem}\label{linear_tax}
Fix $x>0$ and suppose $\gamma(s)\equiv\gamma\in(1,\infty)$. It follows that $\bar\gamma(s)=s(1-\gamma)+\gamma x$ and $a^*(x)=\frac{\gamma x}{\gamma-1}$. Then, for $q\geq 0$ and $x\leq a<a^*(x)$, the expression in~\eqref{firstformula} simplifies to
\begin{eqnarray*}
\E_x\big[e^{-q\sigma_a}1_{\{\sigma_a<T_0^-\}}\big]&=&\exp\bigg(\frac{1}{1-\gamma}\int_{a(1-\gamma)+\gamma x}^x\frac{\dW{q}{u}}{\W{q}{u}}{\rm d}u\bigg)\\
&=&\bigg(\frac{\W{q}{a(1-\gamma)+\gamma x}}{\W{q}{x}}\bigg)^{\frac{1}{\gamma-1}}.
\end{eqnarray*}
Moreover, if $\gamma(s)\equiv \gamma\in(0,1)$, one may recover the first formula of Remark 1.1 in~\cite{kyp_zhou} by a similar computation or an application of~\eqref{alsofordecreasing}.
\end{rem}

\begin{rem}
Let $x>0$ be given and assume that $X$ drifts to $+\infty$ or, equivalently, that $\psi^\prime(0+)>0$. Moreover, suppose that $\gamma(s)\equiv\gamma\in(0,1)$ and hence $\bar\gamma(s)=s(1-\gamma)+\gamma x$. Then, using the fact that $\lim_{s\to\infty}W(s)=1/\psi^\prime(0+)$ (cf. Lemma 3.3~in~\cite{KuzKypRiv}), it follows from~\eqref{eq_2} that
\begin{equation*}
\P_x[T^-_0<\infty]=1-\exp\bigg(-\frac{1}{1-\gamma}\int_x^\infty\frac{W^\prime(s)}{W(s)}{\rm d}s\bigg)=1-(\psi^\prime(0+)W(x))^{\frac{1}{1-\gamma}}.
\end{equation*}
This is analogous to equation (8.7) in~\cite{kyprianou}.
\end{rem}

The proof of Theorem \ref{two_sided_exit} makes heavy use of excursion theory for the process $S-X$. We refer the reader to~\cite{bertoin_book}, Chapters 6 and 7 for background reading. We shall spend a moment setting up some necessary notation which will be used throughout the remainder of the paper. The process $L_t:=S_t-x$ serves as local time at $0$ for the Markov process $S-X$ under $\P_x$. Write $L^{-1}:=\{L^{-1}_t:t\geq 0\}$ for the right-continuous inverse of $L$.
The Poisson point process of excursion indexed by local time shall be denoted by $\{(t,\epsilon_t):t\geq 0\}$, where
\begin{equation*}
\epsilon_t=\{\epsilon_t(s):=X_{L^{-1}_t}-X_{L^{-1}_{t-}+s}:0<s< L^{-1}_t-L^{-1}_{t-}\}
\end{equation*}
whenever $L^{-1}_t-L^{-1}_{t-}>0$. Accordingly, we refer to a generic excursion as $\epsilon(\cdot)$ (or just $\epsilon$ for short) belonging to the space $\mathcal{E}$ of canonical excursions. The intensity measure of the process $\{(t,\epsilon_t):t\geq 0\}$ is given by ${\rm d}t\times {\rm d}n$, where $n$ is a measure on the space of excursions (the excursion measure). A functional of the canonical excursion which will be of interest is $\overline\epsilon=\sup_{s<\zeta}\epsilon(s)$, where $\zeta(\epsilon)=\zeta$ is the length of an excursion. A useful formula for this functional that we shall make use of is the following (cf.~\cite{kyprianou}, Equation (8.18)):
\begin{equation}
n(\overline\epsilon>x)=\frac{W'(x)}{W(x)},
\label{property_ppp_tail}
\end{equation}
provided that $x$ is not a discontinuity point in the derivative of $W$, which is only a concern when $X$ is of bounded variation, in which case there are at most countably many such discontinuities. Another functional of $\epsilon$ that we will also use is $\rho_k:=\inf\{s>0:\epsilon(s)>k\}$, the first passage time above $k$ of the canonical excursion $\epsilon$. Note that, for $a\geq x$, it follows that, under $\P_x$, the event that $S_t=a$ coincides with the event that the process $S_t$ climbs from $x$ to $a$ for the first time. Consequently, $L^{-1}_{a-x}=\tau^+_{a}$.\\

\noindent\textbf{Proof of Theorem~\ref{two_sided_exit}:}
\mbox{ }\\
\indent \eqref{equality_1} For $a\geq x$ we have
\begin{equation*}
\{\sigma_a<T_0^-\}=\{\bar\epsilon_s\leq\bar\gamma(x+s)\text{ for all }0\leq s\leq a-x\}.
\end{equation*}
Recall that for each $q\geq 0$, we have the exponential change of measure 
\[
\left.\frac{{\rm d}\P^{\Phi(q)}}{{\rm d}\P}\right|_{\{X_s:s\leq t\}} = e^{\Phi(q) X_t - qt}, \qquad t\geq 0.
\]
Then, recalling that for each $t\geq 0$, $L^{-1}_t$ is a stopping time, we have for $x>0$,
\begin{eqnarray}
\E_x\big[e^{-q\sigma_a}1_{\{\sigma_a<\tau_0^-\}}\big]&=&\E_x\big[e^{-qL^{-1}_{a-x}}1_{\{\overline \epsilon_s\leq\bar\gamma(x+s)\text{ for all }0\leq s\leq a-x\}}\big]\notag\\
&=&e^{-(a-x)\Phi(q)}\E^{\Phi(q)}_x\big[1_{\{\overline \epsilon_s\leq\bar\gamma(x+s)\text{ for all }0\leq s\leq a-x\}}\big]\notag\\
&=&e^{-(a-x)\Phi(q)}\exp\bigg(-\int_0^{a-x}n_{\Phi(q)}(\overline\epsilon>\bar\gamma(x+s)\,{\rm d}s)\bigg)\notag\\
&=&\exp\bigg(-\int_0^{a-x}\frac{\dW{q}{\bar\gamma(x+s)}}{\W{q}{\bar\gamma(x+s)}}\,{\rm d}s\bigg).\label{chvar}
\end{eqnarray}
Here, $n_{\Phi(q)}$ is the excursion measure of $S-X$ under $\mathbb{P}^{\Phi(q)}$, which is  known to satisfy 
\[
n_{\Phi(q)}(\bar\epsilon>x) = \frac{W^{(q)\prime}(x)}{W^{(q)}(x)} - \Phi(q);
\]
see for example formula (2.7) of \cite{kyp_zhou}.
Now changing variables in (\ref{chvar}) gives (\ref{firstformula}).\\
\indent \eqref{equality_2} An application of the compensation formula yiel{\rm d}s
\begin{eqnarray*}
\lefteqn{\E_x\big[e^{-q\tau_0^-}1_{\{\tau_0^-<\sigma_a\}}\big]}\\
&&=\E_x\bigg[\sum_{0<t\leq a-x}e^{-qL^{-1}_{t-}-q\rho_{\bar\gamma(t+x)}(\epsilon_t)}1_{\{\overline\epsilon_s\leq\bar\gamma(s+x)\forall s<t\}}1_{\{\overline\epsilon_t>\bar\gamma(t+x)\}}\bigg]\\
&&=\E_x\bigg[\int_0^{a-x}e^{-qL_t^{-1}}1_{\{\overline\epsilon_s\leq\bar\gamma(s+x)\forall s<t\}}\int_{\mathcal{E}}e^{-q\rho_{\gamma(t+x)}(\epsilon)}1_{\{\overline\epsilon>\bar\gamma(t+x)\}}n({\rm d}\epsilon)\,{\rm d}t\bigg]\\
&&=\E_x\bigg[\int_0^{a-x}e^{-qL_t^{-1}}1_{\{\overline\epsilon_s\leq\bar\gamma(s+x)\forall s<t\}}f(\bar\gamma(t+x))\,{\rm d}t\bigg]\\
&&=\int_0^{a-x}e^{-t\Phi(q)}\E_x^{\Phi(q)}\big[1_{\{\overline\epsilon_s\leq\bar\gamma(s+x)\forall s<t\}}\big]f(\bar\gamma(t+x))\,{\rm d}t\\
&&=\int_0^{a-x}e^{-t\Phi(q)}\exp\bigg(-\int_0^tn_{\Phi(q)}(\overline\epsilon>\bar\gamma(s+x))\,{\rm d}s\bigg)f(\bar\gamma(t+x))\,{\rm d}t\\
&&=\int_0^{a-x}\exp\bigg(-\int_0^t\frac{\dW{q}{\bar\gamma(s+x)}}{\W{q}{\bar\gamma(s+x)}}\,{\rm d}s\bigg)f(\bar\gamma(t+x))\,{\rm d}t,
\end{eqnarray*}
where in the first equality the time index runs over local times and the sum is the usual shorthand for integration with respect to the Poisson counting measure of excursions, and
\begin{equation*}
f(z)=\int_{\mathcal{E}}e^{-q\rho_z(\epsilon)}1_{\{\overline\epsilon>z\}}n({\rm d}\epsilon)=\frac{\Z{q}{z}\dW{q}{z}}{\W{q}{z}}-q\W{q}{z}
\end{equation*}
is an expression taken from Theorem 1 (equation (18)) of~\cite{exitproblems}. The proof is completed by a straightforward change of variables.\\
\indent\eqref{equality_3} The first part follows by letting $a\to\infty$ in~\eqref{equality_2} and the second part by looking at the complement and then using a similar argument as in~\eqref{equality_1}.\hfill$\Box$

\section{Creeping}

\noindent In principle there are two ways for $U$ to enter $(-\infty, 0)$ continuously; either it goes below zero by creeping  during an excursion away from the curve $\bar\gamma$ or it creeps over zero whilst moving along the curve $\bar\gamma$ at the moment that $\bar\gamma = 0$ (see Fig.~\ref{picture_2}).
\begin{figure}[h]
\centering
\includegraphics{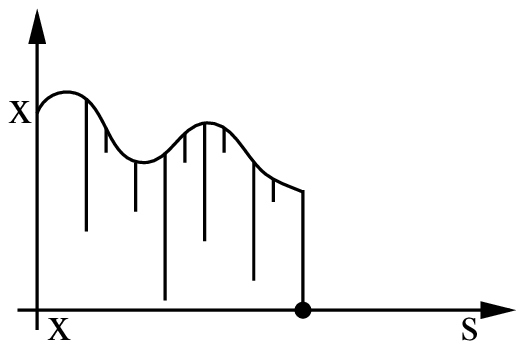}\quad\quad
\includegraphics{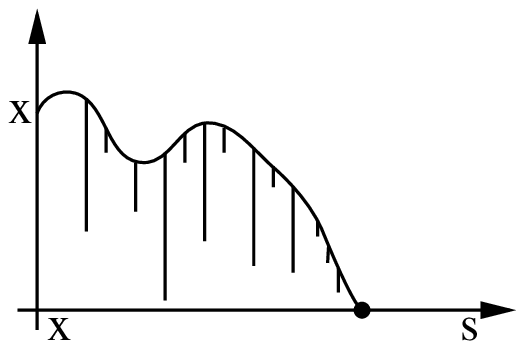}
\caption{Two different ways for $U$ to make first passage into $(-\infty,0)$.}\label{picture_2}
\end{figure}
This lea{\rm d}s to the next definition.
\begin{defi} We say that the process $U$ exhibits type I creeping under $\P_x$ if 
$\P_x(U_{T^-_0} = 0; \sigma_{a^*(x)} >T^-_0)>0$.
We say that the process $U$ exhibits type II creeping under $\P_x$ if $a^*(x)<\infty$ and $\P_x[\sigma_{a^*(x)}=T_0^-]>0$. Note that necessarily $U_{T^-_0 }= 0$ on the event $\{\sigma_{a^*(x)} = T^-_0\}$.
\end{defi}
In Section \ref{additional} we shall make some remarks regarding type I creeping. However, for the present, let us dwell on type II creeping, for which there exists an integral test.
\begin{cor}
Fix $x>0$ and recall that  $a^*(x)=\inf\{s\in[x,\infty):\bar\gamma(s)<0\}$. Assume $a^*(x)<\infty$. We have for all $q\geq 0$,
\begin{equation*}
\E_x\big[e^{-qT_0^-}1_{\{T_0^-=\sigma_{a^*(x)}\}}\big]=\exp\bigg(-\int_x^{a^*(x)}\frac{\dW{q}{\bar\gamma(s)}}{\W{q}{\bar\gamma(s)}}\,{\rm d}s\bigg).
\end{equation*}
\end{cor}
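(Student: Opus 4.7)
The strategy is to obtain the identity by taking the limit $a\uparrow a^{*}(x)$ in part~(\ref{equality_1}) of Theorem~\ref{two_sided_exit}. Two preliminary observations make this feasible. First, since $\gamma$ is locally $S$-integrable, $\bar\gamma$ is absolutely continuous, so the definition of $a^{*}(x)$ together with continuity forces $\bar\gamma(a^{*}(x))=0$. Second, because $X$ has no positive jumps, $S$ is continuous, whence $\sigma_a\uparrow\sigma_{a^{*}(x)}$ as $a\uparrow a^{*}(x)$, and in fact $\sigma_a<\sigma_{a^{*}(x)}$ strictly for $a<a^{*}(x)$, since $S_{\sigma_a}=a<a^{*}(x)=S_{\sigma_{a^{*}(x)}}$.

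The right-hand side of~\eqref{firstformula} converges to the expression in the corollary by monotone convergence, the integrand being non-negative. For the left-hand side, the events $\{\sigma_a<T_0^-\}$ are decreasing in $a$, and the strict inequality above yields
\[
\bigcap_{a<a^{*}(x)}\{\sigma_a<T_0^-\}=\{\sigma_{a^{*}(x)}\leq T_0^-\}.
\]
On $\{\sigma_{a^{*}(x)}<\infty\}$, decomposition~\eqref{pathdecomp} together with $\bar\gamma(a^{*}(x))=0$ and $S_{\sigma_{a^{*}(x)}}=X_{\sigma_{a^{*}(x)}}$ gives $U_{\sigma_{a^{*}(x)}}=0$.

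The step I expect to be most delicate is upgrading $\sigma_{a^{*}(x)}\leq T_0^-$ to $\sigma_{a^{*}(x)}=T_0^-$ almost surely on $\{\sigma_{a^{*}(x)}<\infty\}$. I would argue pathwise at $\sigma_{a^{*}(x)}$: since $X$ does not have monotone paths, in every right-neighbourhood of $\sigma_{a^{*}(x)}$ either $S$ strictly increases (so $\bar\gamma(S)<0$ by definition of $a^{*}(x)$) or $S-X$ becomes strictly positive (since $X$ cannot linger at its running maximum). In either case, decomposition~\eqref{pathdecomp} shows that $U$ takes strictly negative values arbitrarily soon after $\sigma_{a^{*}(x)}$, forcing $T_0^-=\sigma_{a^{*}(x)}$. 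A final application of bounded convergence then identifies the limit of the left-hand side as $\E_x[e^{-qT_0^-}1_{\{T_0^-=\sigma_{a^{*}(x)}\}}]$, completing the proof.
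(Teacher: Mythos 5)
Your argument is correct and is precisely the route the paper intends: the corollary is stated without proof, but it follows from Theorem~\ref{two_sided_exit}\eqref{equality_1} by letting $a\uparrow a^*(x)$ exactly as you describe, using $\bar\gamma(a^*(x))=0$, the upward creeping of $S$ through each level, and the pathwise dichotomy (immediately after $\sigma_{a^*(x)}$ either $S$ increases into $\{\bar\gamma<0\}$ or an excursion of $S-X$ begins) to identify $\{\sigma_{a^*(x)}\leq T_0^-\}$ with $\{\sigma_{a^*(x)}= T_0^-\}$. The only point worth recording explicitly is the $q=0$ case on $\{\sigma_{a^*(x)}=\infty\}$: there $S_\infty\leq a^*(x)<\infty$, so $X$ drifts to $-\infty$ and hence $T_0^-<\infty$ almost surely, which makes the event $\{\sigma_{a^*(x)}=T_0^-=\infty\}$ null and justifies your identification of the limit of the indicators.
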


If $X$ is a compound Poisson process, then the time $X$ spends at the maximum has strictly positive Lebesgue measure and hence one would intuitively expect that type II creeping occurs. In fact, under some assumptions on the behaviour of $\bar\gamma$, it turns out that only spectrally negative L\'evy processes of bounded variation possess the type II creeping property.

\begin{cor}
Fix $x>0$ and assume that $\gamma:[0,\infty)\to(1,\infty)$ is continuous. Further suppose that $a^*(x)<\infty$. Then $X$ exhibits type II creeping under $\P_x$ if and only if $X$ is of bounded variation.
\end{cor}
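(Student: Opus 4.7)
The plan is to read off the answer directly from the corollary immediately preceding this one. Taking $q=0$ in that corollary gives
\[
\P_x[T_0^- = \sigma_{a^*(x)}] = \exp\bigg(-\int_x^{a^*(x)} \frac{W'(\bar\gamma(s))}{W(\bar\gamma(s))}\,{\rm d}s\bigg),
\]
so (noting that by continuity $\bar\gamma(x)=x>0=\bar\gamma(a^*(x))$, so $a^*(x)>x$) type II creeping under $\P_x$ is equivalent to the integral in the exponent being finite.

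Next I would change variables to put this integral into a form where the scale function $W$ can be handled near the origin. Since $\gamma>1$ on $[0,\infty)$, the function $\bar\gamma$ has derivative $1-\gamma<0$, hence is strictly decreasing on $[x,a^*(x)]$ and maps this interval bijectively onto $[0,x]$. Substituting $u=\bar\gamma(s)$ yields
\[
\int_x^{a^*(x)} \frac{W'(\bar\gamma(s))}{W(\bar\gamma(s))}\,{\rm d}s = \int_0^x \frac{W'(u)}{W(u)}\,\frac{{\rm d}u}{\gamma(\bar\gamma^{-1}(u))-1}.
\]
Because $\gamma$ is continuous on the compact interval $[x,a^*(x)]$ and strictly bigger than $1$ there, the factor $1/(\gamma(\bar\gamma^{-1}(u))-1)$ is bounded above and bounded away from zero on $(0,x]$. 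Hence the original integral is finite if and only if $\int_0^x W'(u)/W(u)\,{\rm d}u<\infty$.

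Finally, since $W$ is absolutely continuous and strictly positive on $(0,\infty)$, for $0<\epsilon<x$ one has
\[
\int_\epsilon^x \frac{W'(u)}{W(u)}\,{\rm d}u = \log W(x) - \log W(\epsilon),
\]
which remains finite as $\epsilon\downarrow 0$ precisely when $W(0+)>0$. By~\eqref{continuityatorigin} this happens exactly when $X$ is of bounded variation, in which case $W(0+)=\mathtt{d}^{-1}$; in the unbounded variation case $W(0+)=0$ and the integral diverges. Combining with the first paragraph, $U$ exhibits type II creeping under $\P_x$ if and only if $X$ is of bounded variation. There is no real obstacle here: the only point needing care is to verify that $1/(\gamma-1)$ is bounded on the relevant interval, which is ensured by the hypothesis that $\gamma$ is continuous together with $a^*(x)<\infty$.
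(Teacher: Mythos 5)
Your proposal is correct and follows essentially the same route as the paper: reduce type II creeping to finiteness of $\int_x^{a^*(x)}W'(\bar\gamma(s))/W(\bar\gamma(s))\,{\rm d}s$ via the preceding corollary, change variables using the bijectivity of $\bar\gamma$, bound the factor $1/(\gamma(\bar\gamma^{-1}(u))-1)$ above and below by continuity of $\gamma$ on the compact interval, and conclude from $\int_0^x W'/W = \log W(x)-\log W(0+)$ together with \eqref{continuityatorigin}. The only cosmetic difference is that the paper handles the bounded-variation direction by bounding $1/W$ by $\mathtt{d}$ instead of using the logarithm in both directions.
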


\begin{proof}First observe that the assumptions on $\gamma$ imply that $\bar\gamma:(x,a^*(x))\to(x,0)$ is a continuously differentiable bijection. Further let $C_1:=\min_{0\leq s\leq a^*(x)} \vert 1-\gamma(s)\vert>0$ and $C_2:=\max_{0\leq s\leq a^*(x)}\vert1-\gamma(s)\vert<\infty$.\\
\indent If $X$ is of bounded variation, and hence takes the form (\ref{LevyItodecomposition2}), we have by a change of variables
\begin{equation*}
\int_x^{a^*(x)}\frac{W^\prime(\bar\gamma(s))}{W(\bar\gamma(s))}\,{\rm d}s=\int_0^x\frac{W^\prime(t)}{W(t)\vert1-\gamma(\bar\gamma^{-1}(t))\vert}\,{\rm d}t\leq\frac{\mathtt{d}}{C_1}(W(x)-\mathtt{d}^{-1})
\end{equation*}
and hence type II creeping follows. On the other hand, if $X$ is of unbounded variation, it follows similarly that
\begin{equation*}
\int_x^{a^*(x)}\frac{W^\prime(\bar\gamma(s))}{W(\bar\gamma(s))}\,{\rm d}s\geq\frac{1}{C_2}\int_0^x\log(W)^\prime(t)\,{\rm d}t\geq \frac{1}{C_2}\big[\log(W(x))-\log(W(0+))\big].\end{equation*}
The last expression equals infinity since $W(0+)=0$ and, consequently, type II creeping cannot occur.\hfill$\square$
\end{proof}\\

We conclude this section with an example of type II creeping for a process $X$ which includes a Gaussian component $\sigma>0$ in the case that  $\bar\gamma$ has infinite gradient when hitting zero. This shows that relaxing the conditions on $\gamma$ can lead to type II creeping in the unbounded variation case. To this end, we need some auxiliary quantities. Let $a>0$ be fixed and define, for $y\in[0,a]$,
\begin{equation*}
f(y):=y-(a-y)^{\frac{1}{2}}.
\end{equation*}
Clearly $f(0)<0$ and $f(a)>0$. Since $f$ is strictly increasing on $[0,a]$ and continuous, by the Intermediate Value Theorem, there exists a unique $x^*\in(0,a)$ such that $f(x^*)=0$. Now let $\gamma\in(1,\infty)$ and define for $s\in[0,\infty)$,
\begin{equation*}
\gamma(s):=\begin{cases}1+\frac{1}{2}(a-s)^{-\frac{1}{2}},&s\leq a,\\
\gamma,&s>a.\end{cases}
\end{equation*}
Hence, using the definition of $x^*$, we see that
\begin{equation*}
\bar\gamma(s)=\begin{cases}(a-s)^{\frac{1}{2}},&s\leq a\\
(1-\gamma)(s-a),&s>a.\end{cases}\end{equation*}
In particular, $a^*(x^*)=a$. Changing variables and using the fact that $\sigma>0$ (which implies that $W\in C^1(0,\infty)$ and $\lim_{u\downarrow 0}uW(u)^{-1}=\sigma^2/2$) yields
\begin{equation*}
\int_{x^*}^{a}\frac{W^\prime(\bar\gamma(s))}{W(\bar\gamma(s))}\,ds=2\int_0^{x^*}\frac{uW^\prime(u)}{W(u)}\,{\rm d}u\leq 2x^* \sup_{0<u\leq x^*}\frac{uW^\prime(u)}{W(u)}<\infty.
\end{equation*}
Hence, type II creeping occurs under $\mathbb{P}_{x^*}$.

\section{Additional results relevant to risk theory}\label{additional}

Let us return to the setting of the stochastic perturbation $U$ in the setting of insurance risk.
It is also possible to obtain the analogous statements to Theorem 1.2 and 1.3 in \cite{kyp_zhou}. 
The analogue of the first of these two theorems concerns the expectation of a path functional which can be interpreted as the net present value of tax paid until ruin and reads as follows.

\begin{thm}\label{npv}
Let $x>0$ and recall $a^*(x)=\inf\{s\in[x,\infty):\bar\gamma(s)<0\}\in(0,\infty]$. For $q\geq 0$ we have
\begin{equation*}
\E_x\bigg[\int_0^{T_0^-}e^{-qu}\gamma(S_u)\,dS_u\bigg]=\int_x^{a^*(x)}\exp\bigg(-\int_x^t\frac{\dW{q}{\bar\gamma(s)}}{\W{q}{\bar\gamma(s)}}\,{\rm d}s\bigg)\gamma(t)\,{\rm d}t.
\end{equation*}
\end{thm}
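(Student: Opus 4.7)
The plan is to reduce the expectation to a Lebesgue integral in local time coordinates and then to recognise the resulting integrand as one already computed in the proof of Theorem~\ref{two_sided_exit}\eqref{equality_1}. The starting point is the observation that $S$ increases only when $X$ is at its running supremum, so that, under $\P_x$, the Stieltjes measure ${\rm d}S_u$ coincides with ${\rm d}L_u$, the local time measure of $S-X$ at zero. Substituting $t=L_u$, so that $u=L^{-1}_{t-}$ and $S_u=x+t$ on the support of ${\rm d}S$, one obtains
\begin{equation*}
\int_0^{T_0^-}e^{-qu}\gamma(S_u)\,{\rm d}S_u=\int_0^{L_{T_0^-}}e^{-qL^{-1}_{t-}}\gamma(x+t)\,{\rm d}t,
\end{equation*}
where $L_{T_0^-}=\eta\wedge(a^*(x)-x)$ with $\eta:=\inf\{t>0:\overline\epsilon_t>\bar\gamma(x+t)\}$ denoting the first local time at which an excursion of $S-X$ rises above the curve $\bar\gamma$.

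Since the integrand is non-negative, Tonelli's theorem gives
\begin{equation*}
\E_x\bigg[\int_0^{T_0^-}e^{-qu}\gamma(S_u)\,{\rm d}S_u\bigg]=\int_0^{a^*(x)-x}\gamma(x+t)\,\E_x\big[e^{-qL^{-1}_t}1_{\{\eta>t\}}\big]\,{\rm d}t,
\end{equation*}
where I have used that $L^{-1}_{t-}=L^{-1}_t$ Lebesgue-a.e. The event $\{\eta>t\}$ is precisely $\{\overline\epsilon_s\leq\bar\gamma(x+s)\text{ for all }s\leq t\}$, so the inner expectation is of the same form as the one treated in the proof of Theorem~\ref{two_sided_exit}\eqref{equality_1}: the exponential change of measure at the stopping time $L^{-1}_t$, together with the identity $X_{L^{-1}_t}-x=t$ and the Poisson point process tail formula $n_{\Phi(q)}(\overline\epsilon>z)=\dW{q}{z}/\W{q}{z}-\Phi(q)$, yield
\begin{equation*}
\E_x\big[e^{-qL^{-1}_t}1_{\{\eta>t\}}\big]=\exp\bigg(-\int_0^t\frac{\dW{q}{\bar\gamma(x+s)}}{\W{q}{\bar\gamma(x+s)}}\,{\rm d}s\bigg).
\end{equation*}
A final change of variables $t\mapsto t-x$ in the outer integral and $s\mapsto s-x$ in the inner integral shifts the limits from $[0,a^*(x)-x]$ to $[x,a^*(x)]$ and reproduces the stated identity.

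The main technical subtlety lies in the identification ${\rm d}S_u={\rm d}L_u$ together with the time change $t=L_u$: one must verify that the upper limit $L_{T_0^-}$ equals $\eta$ on the type~I event $\{T_0^-<\sigma_{a^*(x)}\}$ and equals $a^*(x)-x$ on the type~II event $\{T_0^-=\sigma_{a^*(x)}\}$, so that in either case the effective upper bound is $\eta\wedge(a^*(x)-x)$. Once this bookkeeping is in place, the remainder of the argument is a verbatim re-use of the excursion-theoretic computation already performed for Theorem~\ref{two_sided_exit}\eqref{equality_1}.
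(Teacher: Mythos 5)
Your proposal is correct and is essentially the argument the paper intends: the paper omits the proof, remarking only that it is virtually identical to that of Theorem 1.2 in \cite{kyp_zhou}, which proceeds by exactly your route --- the identification ${\rm d}S_u={\rm d}L_u$, the time change to local time with upper limit $\eta\wedge(a^*(x)-x)$, Fubini, and then the one-sided exit computation already carried out in the proof of Theorem~\ref{two_sided_exit}\eqref{equality_1}. The one slip is the appeal to Tonelli ``since the integrand is non-negative'': in this paper $\gamma$ is a general $\R$-valued locally $S$-integrable function, so you should either decompose $\gamma$ into its positive and negative parts or invoke Fubini under the absolute integrability that is implicitly required for the stated identity to be meaningful.
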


\begin{rem}
Fix $x>0$ and suppose $\gamma(s)=\gamma\in(1,\infty)$. A computation as in Remark~\ref{linear_tax} shows that
\begin{equation*}
\E_x\bigg[\int_0^{T_0^-}e^{-qu}\gamma(S_u)\,dS_u\bigg]=\frac{\gamma}{\gamma-1}\int_0^x\bigg(\frac{\W{q}{t}}{\W{q}{x}}\bigg)^{\frac{1}{\gamma-1}}{\rm d}t.
\end{equation*}
Similarly, if one assumes that $\gamma(s)\equiv\gamma\in(0,1)$, it is straightforward to recover the second formula in Remark 1.1 of~\cite{kyp_zhou}.
\end{rem}

Although unnecessary, for the sake of presentational convenience, we shall restrict ourselves to the case that $\gamma:[0,\infty)\rightarrow(1,\infty)$ in order to state an analogue of Theorem 1.3 in \cite{kyp_zhou}. In that case, $\bar\gamma$ is a strictly decreasing function and accordingly has an inverse, $\bar\gamma^{-1}$. Note that in \cite{kyp_zhou} it was assumed that $\gamma:[0,\infty)\rightarrow[0,1)$ such that $\int_0^\infty (1- \gamma(s)){\rm d}s = \infty$.  If we refer to the latter as a {\it light tax regime} then we may think of the current setting as a {\it heavy tax regime}. We have the following result, the second part of which addresses the issue of type I creeping.

\begin{thm}\label{overshoot_undershoot}
Fix $x>0$ and suppose $a^*(x)<\infty$. Let $\kappa=L^{-1}_{L_{T_0^-}-}$, the last moment that tax is paid before ruin. Denote by $\nu$ the L\'evy measure of $-X$. For any $z>0$, $x>\theta\geq y\geq 0$ and $\alpha,\beta\geq 0$, we have
\begin{eqnarray*}\label{gs}
\lefteqn{
\mathbb{E}_x\left(
e^{-\alpha\kappa - \beta(T^-_0 - \kappa)}; A_{T^-_0}\in {\rm d}\theta, U_{T^-_0-}\in {\rm d}y, - U_{T^-_0}\in {\rm d}z
\right)
}&&\notag\\
&&=
\frac{1}{ \gamma(\bar\gamma^{-1}(\theta))-1}  \exp\left\{-\int^x_\theta
\frac{W^{(\alpha)\prime}(v)}{W^{(\alpha)}(v)(\gamma(\bar{\gamma}^{-1}(v))-1)}{\rm d}v \right\} \notag\\
&&\hspace{0.5cm}\cdot\bigg[
\left\{W^{(\beta)\prime}(\theta-y)-\frac{W^{(\beta)\prime}(\theta)}{W^{(\beta)}(\theta)}W^{(\beta)}(\theta-y)\right\}\nu(y+{\rm d}z) 
\mathbf{1}_{\{y<\theta\}}
{\rm d}y \notag\\
&&\hspace{5.5cm}+
 W^{(\beta)}(0+)\nu(\theta + {\rm d}z)\delta_\theta({\rm d}y) \bigg]{\rm d}\theta
\end{eqnarray*}
where $\delta_{\theta}({\rm d}y)$ is the Dirac measure which assigns unit mass to the point $\theta$. Furthermore, for $0<\theta<x$ we also have
\begin{eqnarray*}
\label{gs-creep}
\lefteqn{\mathbb{E}_x\left(
e^{-\alpha\kappa - \beta(T^-_0 - \kappa)}; A_{T^-_0}\in {\rm d}\theta, U_{T^-_0} =0\right) }\notag\\
&&=
\frac{1}{ \gamma(\bar\gamma^{-1}(\theta))-1}  \exp\left\{-\int^x_\theta
\frac{W^{(\alpha)\prime}(y)}{W^{(\alpha)}(y)(\gamma(\bar{\gamma}^{-1}(y))-1)}{\rm d}y \right\}\notag\\
&&\hspace{5cm}\cdot\frac{\sigma^2}{2} \left\{ \frac{W^{(\beta)\prime}(\theta)^2}{W^{(\beta)}(\theta)} - W^{(\beta)\prime\prime}(\theta)\right\}{\rm d}\theta,
\end{eqnarray*}
where $\sigma$ is the Gaussian coefficient in the L\'evy-It\^{o} decomposition.
\end{thm}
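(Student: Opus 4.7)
The plan is to prove both displays simultaneously via the excursion-theoretic decomposition $U_t = \bar\gamma(S_t) - (S_t - X_t)$, applying the compensation formula for the Poisson point process of excursions of $S-X$ in the manner of Theorem~\ref{two_sided_exit}(b), and then feeding in the excursion-measure triple law at first passage (together with its creeping counterpart) to unfold the detailed information about the undershoot, the overshoot, and the possibility of continuous crossing.

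First I would set up the correspondence between $(A_{T_0^-}, U_{T_0^--}, U_{T_0^-})$ and the geometry of the ruin excursion. On the event $\{T_0^- < \sigma_{a^*(x)}\}$ the process $S$ is constant on $[\kappa, T_0^-]$, so $A_{T_0^-} = \bar\gamma(S_\kappa) = \bar\gamma(x + L_{T_0^-})$. Writing $\theta := A_{T_0^-}$ and letting $\epsilon$ denote the excursion of $S-X$ at local time $L_{T_0^-}$, we have $U_{T_0^--} = \theta - \epsilon(\rho_\theta -)$ and $U_{T_0^-} = \theta - \epsilon(\rho_\theta)$, where $\rho_\theta(\epsilon) = \inf\{s > 0 : \epsilon(s) > \theta\}$. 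Consequently $\{A_{T_0^-} \in {\rm d}\theta,\,U_{T_0^--}\in {\rm d}y,\,-U_{T_0^-}\in {\rm d}z\}$ becomes $\{\bar\gamma(x+L_{T_0^-})\in {\rm d}\theta,\,\epsilon(\rho_\theta-)\in\theta-{\rm d}y,\,\epsilon(\rho_\theta)-\theta\in {\rm d}z\}$, while the creeping event $\{U_{T_0^-} = 0\}$ becomes $\{\epsilon(\rho_\theta) = \theta\}$.

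Next I would apply the compensation formula, summing over excursion times indexed by local time $t \in (0, a^*(x)-x)$ and picking out the first excursion to exceed the moving barrier $\bar\gamma(x+\cdot)$. The factor $e^{-\alpha \kappa} = e^{-\alpha L^{-1}_{t-}}$ together with the no-earlier-ruin indicator $\mathbf 1_{\{\bar\epsilon_s\le\bar\gamma(x+s),\, s<t\}}$ is absorbed by the $\Phi(\alpha)$-exponential change of measure exactly as in the derivation of~(\ref{firstformula}), contributing the prefactor $\exp(-\int_0^t \dW{\alpha}{\bar\gamma(x+s)}/\W{\alpha}{\bar\gamma(x+s)}\,{\rm d}s)$. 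The excursion itself is then integrated against $n({\rm d}\epsilon)$, weighted by $e^{-\beta\rho_\theta(\epsilon)}$ and the appropriate indicator on $(\epsilon(\rho_\theta-),\epsilon(\rho_\theta))$. At this point I would invoke two classical identities for spectrally negative L\'evy processes (see Chapter~8 of \cite{kyprianou} and~\cite{exitproblems}): the excursion-measure triple law
\[
\int_{\mathcal E} e^{-\beta\rho_\theta(\epsilon)}\mathbf 1_{\{\epsilon(\rho_\theta-)\in\theta-{\rm d}y,\,\epsilon(\rho_\theta)-\theta\in {\rm d}z\}}\,n({\rm d}\epsilon) = \Bigl\{\dW{\beta}{\theta-y} - \frac{\dW{\beta}{\theta}}{\W{\beta}{\theta}}\W{\beta}{\theta-y}\Bigr\}\nu(y+{\rm d}z)\mathbf 1_{\{y<\theta\}}\,{\rm d}y + \W{\beta}{0+}\nu(\theta+{\rm d}z)\delta_\theta({\rm d}y),
\]
in which the $\delta_\theta$-atom is nontrivial only in the bounded variation case and captures an excursion that jumps out of zero directly above $\theta$; and the companion creeping identity
\[
\int_{\mathcal E} e^{-\beta\rho_\theta(\epsilon)}\mathbf 1_{\{\epsilon(\rho_\theta)=\theta\}}\,n({\rm d}\epsilon) = \frac{\sigma^2}{2}\Bigl\{\frac{\dW{\beta}{\theta}^2}{\W{\beta}{\theta}} - W^{(\beta)\prime\prime}(\theta)\Bigr\},
\]
which vanishes when $\sigma=0$. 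The final change of variable $\theta = \bar\gamma(x+t)$, with ${\rm d}t = {\rm d}\theta/(\gamma(\bar\gamma^{-1}(\theta))-1)$ (positive by the heavy-tax assumption $\gamma>1$), converts the $t$-integral on $(0, a^*(x)-x)$ into a $\theta$-integral on $(0, x)$ and simultaneously rewrites the inner integral in the exponential as $\int_x^\theta \dW{\alpha}{v}/[\W{\alpha}{v}(\gamma(\bar\gamma^{-1}(v))-1)]\,{\rm d}v$, producing the two displayed formulas.

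The main obstacle is not the compensation formula, which is only a slight refinement of the one already used in the proof of Theorem~\ref{two_sided_exit}(b), but the careful bookkeeping of the three qualitatively distinct ways the ruin excursion can cross the level $\theta$: by an interior jump (the absolutely continuous part in~$y$), by its initial jump out of zero (the $\delta_\theta({\rm d}y)$-atom, present only for bounded variation $X$ because $\W{\beta}{0+} = \mathtt d^{-1}$ in that case and $0$ otherwise), and by continuous passage (the creeping contribution, present only when $\sigma>0$). Each is controlled by a separate identity for $n$, and it is their combination after the change of variable that yields precisely the stated expressions for the non-creeping and creeping parts of the joint law.
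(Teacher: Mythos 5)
Your proposal is correct and follows essentially the route the paper itself intends: the paper gives no written proof but defers to the proofs of Theorems 1.2 and 1.3 of \cite{kyp_zhou}, which proceed exactly as you do --- the compensation formula over excursions of $S-X$, the $\Phi(\alpha)$-change of measure to handle $e^{-\alpha\kappa}$ and the no-earlier-ruin indicator as in Theorem~\ref{two_sided_exit}(b), the excursion-measure triple law and its creeping counterpart for the straddling excursion, and the final change of variables $\theta=\bar\gamma(x+t)$. Your identification of the three crossing mechanisms (interior jump, initial jump out of zero giving the $\delta_\theta({\rm d}y)$ atom via $W^{(\beta)}(0+)$, and Gaussian creeping) matches the structure of the stated formulae.
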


The proof of both of these theorems is virtually identical to the proofs of Theorems 1.2 and 1.3 in \cite{kyp_zhou} once the obvious adjustments have been made and accordingly are left as an exercise to the reader.

\end{document}